\newtheorem{thm}{Theorem}
\newtheorem{prop}[thm]{Proposition}
\newtheorem{lem}[thm]{Lemma}
\newtheorem{cor}[thm]{Corollary}
\theoremstyle{definition}
\newtheorem{defn}[thm]{Definition}
\newtheorem{example}[thm]{Example}
\theoremstyle{remark}
\newtheorem{rem}[thm]{Remark}
\providecommand{\enorm}[2][\relax]{\left|\left|\left|#2\right|\right|\right|\ifx#1\relax\else_{#1}\fi}
\providecommand{\text}[1]{\mbox{#1}}
\providecommand{\submitto}[1]{\maketitle}
\let\ead=\email
\begin{document}
\title{Operator Covariant Transform and Local Principle}

\author[Vladimir V. Kisil]%
{\href{http://www.maths.leeds.ac.uk/~kisilv/}{Vladimir V. Kisil}}
\address{School of Mathematics,
University of Leeds,
Leeds LS2\,9JT,
UK\\ 
On  leave from Odessa University}

\ead{\href{mailto:kisilv@maths.leeds.ac.uk}{kisilv@maths.leeds.ac.uk}}


\date{\today}

\begin{abstract}
  We describe connections between the localization technique introduced
  by I.B.~Simonenko and operator covariant transform produced by
  nilpotent Lie groups.
\end{abstract}
\submitto{\JPA}
\tableofcontents
\section{Introduction}
\label{sec:introduction}

In 1965 I.B.~Simonenko pioneered~\cite{Simonenko65a,Simonenko65b}
localization technique in the theory of operators. It still remains an
important tool in this area, see for
example~\cites{Vasilevski08a,BoetcherKarlovichSpitkovsky02a,DuduchavaSaginashviliShargorodsky97a,KarlovichSpitkovsky95a,RabinovichSamko11a,KarlovichSilbermann04a}.
Many questions addressed by this technique, e.g. boundary value
problems, are rooted in mathematical physics. We also discuss
connections with quantum mechanics in the closing section of this paper.

The localisation method was developed in various directions and there
is no possibility to mention all works based on numerous existing
variants and modifications of the localization technique.  Several
generalizations, e.g. within \(C^*\)-algebras
setup~\cite{Douglas72}*{Prop.~4.5}, capture the abstract skeleton of
the localization technique. However the idea of ``localization'' has
an explicit geometrical meaning, which often escapes those general
schemes.

We present here a different point of view on the original works of
Simonenko, which highlights the r\^ole of groups in the constructions.
Thus it is not a generalization but rather an attempt to link certain
geometrical meaning of locality with homogeneous structure of
nilpotent Lie groups.  This paper grown up from our earlier
works~\cite{Kisil94a,Kisil96e,Kisil92,Kisil93e,Kisil93b,Kisil94f,Kisil98a}
revised in the light of recent research~\cite{Kisil09d,Kisil10c}.

The paper outline is as follows: Section~\ref{sec:preliminaries}
collects preliminary information from other works, which will be used
here. In Section~\ref{sec:affine-group-local} we use homogeneous
structure of nilpotent Lie groups to define basic elements of
localization. Operators which are invariant under certain group action
are main building blocks for localization, we demonstrate this in
Section~\ref{sec:local-invar}. The final
Section~\ref{sec:closing-remarks} offers summary of our observations
which lead to new directions for further research.

\section{Preliminaries}
\label{sec:preliminaries}

\subsection{Classic Localization Technique}
\label{sec:class-local-techn}

We present here the fundamental definitions from the work of
I.B.~Simonenko~\cite{Simonenko65a,Simonenko65b} formulated for
operators on \(\FSpace{L}{p}(\Space{R}{n})\). Essential norm of an
operator is defined by
\begin{displaymath}
  \enorm{A}=\inf_K\norm{A-K},
\end{displaymath}
where the infimum is taken over all compact operators \(K\). For a
measurable set \(F\subset\Space{R}{n}\) we define the projection operator
\(P_F:\FSpace{L}{p}(\Space{R}{n})\rightarrow
\FSpace{L}{p}(\Space{R}{n})\) by:
\begin{equation}
  \label{eq:project-defn}
  [P_F f](x)=\left\{
    \begin{array}{ll}
      f(x),& \text{ if } x\in F;\\
      0,& \text{ otherwise}.
    \end{array}
  \right.
\end{equation}
The operators, most suitable for the localization method, are
defined as follows.
\begin{defn}\cite{Simonenko65a}*{\S~I.1}
  An operator \(A\) is of \emph{local type} if for any two closed disjoint sets
  \(F_1\) and  \(F_2\) the operator  \(P_{F_1} A P_{F_2}\) is compact.
\end{defn}
The cornerstone definition for the whole theory is
\begin{defn} \cite{Simonenko65a}*{\S~I.2}
  \label{de:local-equiv}
  Operators \(A\), \(B: \FSpace{L}{p}(\Space{R}{n})\rightarrow
  \FSpace{L}{p}(\Space{R}{n})\) are called \emph{equivalent} at a point
  \(x_0\) if for any \(\varepsilon>0\) there is a neighborhood \(u\)
  of \(x_0\) such that \(\enorm{AP_u-BP_u}<\varepsilon\) and
  \(\enorm{P_uA-P_uB}<\varepsilon\). This is denoted
  \(A\stackrel{x_0}{\sim} B\).
\end{defn}
As usual there are two stages in this method: analysis and synthesis.
Local equivalence decomposes operators into families of local
representatives. Now we define the opposite process of a reconstruction.

\begin{defn} \cite{Simonenko65a}*{\S~I.5}
  Let \(A_x\) be a family of operators \(\FSpace{L}{p}(X)\rightarrow
  \FSpace{L}{p}(X)\) depending from \(x\in X\). An operator
  \(A:\FSpace{L}{p}(X)\rightarrow \FSpace{L}{p}(X)\) is an
  \emph{envelope} of \(A_x\) if for every \(x\) we have
  \(A\stackrel{x}{\sim} A_x\).
\end{defn}
An envelope can be build \cite{Simonenko65a}*{\S~I.5} as the limit
\(A\) of a sequence \(A_n\) which is defined by the expression:
\begin{equation}
  \label{eq:envelope-approx}
  A_n=\sum_{j=1}^n P_{u_j} A_{x_j}  P_{u_j},
\end{equation}
where sets \(u_n\) make a decomposition of \(X\) and \(x_n\in u_n\). 

\subsection{Covariant Transform}
\label{sec:covariant-transform}
The following concept is a natural development of the coherent states
(wavelets) based on group representations.
\begin{defn}\cites{Kisil09d,Kisil10c}
  Let \(\uir{}{}\) be a representation of
  a group \(G\) in a space \(V\) and \(F\) be an operator from \(V\) to a space
  \(U\). We define a \emph{covariant transform}%
  \index{covariant!transform}%
  \index{transform!covariant}%
  \index{transform!wavelet|see{wavelet transform}}
  \(\oper{W}\) from \(V\) to the space \(\FSpace{L}{}(G,U)\) of
  \(U\)-valued functions on \(G\) by the formula:
  \begin{equation}
    \label{eq:coheret-transf-gen}
    \oper{W}: v\mapsto \hat{v}(g) = F(\uir{}{}(g^{-1}) v), \qquad
    v\in V,\ g\in G.
  \end{equation}
  Operator \(F\) will be called \emph{fiducial operator}%
  \index{fiducial operator}%
  \index{operator!fiducial} in this context.
\end{defn}
We borrow the name for operator \(F\) from fiducial vectors of Klauder
and Skagerstam~\cite{KlaSkag85}.  The wavelet transform, which is a
particular case of the covariant transform, corresponds to the
fiducial operator which is a linear functional. Thus its image
consists scalar-valued functions. It seems to be most favorable
situation, cf.~\cite{Kisil10c}*{Rem.~3}, and was believed to be the
only possible one for a long time.  A moral of the present work is
that the covariant transform can be useful even in the other extreme
limit: if the range of the fiducial operator is the entire space
\(V\).

By the way, we do not require that the fiducial
operator \(F\) shall be linear in general, however it will be always
linear in the present work.  Sometimes the positive homogeneity,
i.e. \(F(t v)=tF(v)\) for \(t>0\), alone can be already sufficient,
see~\cites{Kisil10c,Kisil11c}.

The following property is inherited by the coherent transform from the
wavelet one.
\begin{thm} \cites{Kisil09d,Kisil10c}
  \label{pr:inter1} 
  The covariant transform~\eqref{eq:coheret-transf-gen}
  intertwines%
  \index{intertwining operator}%
  \index{operator!intertwining} \(\uir{}{}\) and the left regular representation
  \(\Lambda\)    on \(\FSpace{L}{}(G,U)\):
  \begin{displaymath}
    \oper{W} \uir{}{}(g) = \Lambda(g) \oper{W}.
  \end{displaymath}
  Here \(\Lambda\) is defined as usual by:
  \begin{equation}\label{eq:left-reg-repr}
    \Lambda(g): f(h) \mapsto f(g^{-1}h).
  \end{equation}
\end{thm}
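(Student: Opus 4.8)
The plan is to verify the identity $\oper{W}\uir{}{}(g) = \Lambda(g)\oper{W}$ by evaluating both sides on an arbitrary vector $v\in V$ and then comparing the resulting $U$-valued functions pointwise on $G$. First I would fix $v\in V$ and $g\in G$, set $w=\uir{}{}(g)v$, and compute $[\oper{W}w](h)$ for an arbitrary $h\in G$ directly from the definition~\eqref{eq:coheret-transf-gen}: this gives $[\oper{W}w](h)=F(\uir{}{}(h^{-1})w)=F(\uir{}{}(h^{-1})\uir{}{}(g)v)$.

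The next step is to use the fact that $\uir{}{}$ is a group representation, so $\uir{}{}(h^{-1})\uir{}{}(g) = \uir{}{}(h^{-1}g) = \uir{}{}((g^{-1}h)^{-1})$. Substituting, we obtain $[\oper{W}w](h) = F(\uir{}{}((g^{-1}h)^{-1})v) = \hat{v}(g^{-1}h)$, where $\hat v = \oper{W}v$ is the covariant transform of the original vector. Comparing with the definition~\eqref{eq:left-reg-repr} of the left regular representation, $\hat v(g^{-1}h) = [\Lambda(g)\hat v](h)$. Since $h\in G$ was arbitrary, this shows $\oper{W}(\uir{}{}(g)v) = \Lambda(g)(\oper{W}v)$ as elements of $\FSpace{L}{}(G,U)$; and since $v\in V$ was arbitrary, the operator identity $\oper{W}\uir{}{}(g) = \Lambda(g)\oper{W}$ follows.

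The computation is entirely formal and presents no real obstacle; the only point requiring a modicum of care is the bookkeeping of inverses, i.e. checking that the $g^{-1}$ appearing inside~\eqref{eq:coheret-transf-gen} combines with the $g$ from $\uir{}{}(g)$ to produce exactly the argument shift $h\mapsto g^{-1}h$ demanded by~\eqref{eq:left-reg-repr}, with no stray inverse. Note that linearity of $F$ is not needed anywhere in this argument — the identity holds for an arbitrary fiducial operator $F$, which is consistent with the remark in the text that $F$ need not even be homogeneous for the intertwining property alone. One should, however, implicitly assume that $\oper{W}v$ genuinely lands in the function space $\FSpace{L}{}(G,U)$ on which $\Lambda$ acts, so that both sides of the asserted equality are well-defined elements of the same space; this is a standing hypothesis rather than something to be proved here.
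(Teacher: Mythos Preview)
Your proposal is correct and is exactly the standard direct verification: apply the definition~\eqref{eq:coheret-transf-gen}, use the homomorphism property \(\uir{}{}(h^{-1})\uir{}{}(g)=\uir{}{}((g^{-1}h)^{-1})\), and recognise the result as \(\Lambda(g)\hat{v}\). The paper does not supply its own proof here---the theorem is quoted from~\cites{Kisil09d,Kisil10c}---but the argument in those references is precisely the pointwise computation you give, so there is no alternative approach to compare.
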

The next result follows immediately.
\begin{cor}\label{co:pi}
  The image space \(\oper{W}(V)\) is invariant under the
  left shifts on \(G\).
\end{cor}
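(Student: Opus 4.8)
The plan is to read off the statement directly from the intertwining relation in Theorem~\ref{pr:inter1}, so no new machinery is needed. First I would unwind the definition of the image space: an element of \(\oper{W}(V)\) is by definition a function of the form \(\hat{v}=\oper{W}v\) for some \(v\in V\). The goal is to show that \(\Lambda(g)\hat{v}\) again lies in \(\oper{W}(V)\) for every \(g\in G\).

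The key step is the single substitution
\begin{displaymath}
  \Lambda(g)\hat{v} = \Lambda(g)\oper{W}v = \oper{W}\uir{}{}(g)v = \oper{W}(\uir{}{}(g)v),
\end{displaymath}
where the middle equality is exactly the identity \(\oper{W}\uir{}{}(g)=\Lambda(g)\oper{W}\) of Theorem~\ref{pr:inter1}. Since \(\uir{}{}(g)v\in V\), the right-hand side is visibly an element of \(\oper{W}(V)\), which establishes \(\Lambda(g)\oper{W}(V)\subseteq\oper{W}(V)\), i.e. the asserted invariance. I would then remark that because \(\uir{}{}(g)\) is invertible (being a group representation) one in fact gets equality \(\Lambda(g)\oper{W}(V)=\oper{W}(V)\), so the family \(\{\Lambda(g)\}_{g\in G}\) genuinely acts on \(\oper{W}(V)\) rather than merely contracting it.

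There is no substantive obstacle here: the corollary is a formal consequence of the already-proved intertwining property, and the only points worth stating explicitly are the unwinding of ``image space'' and the use of invertibility of \(\uir{}{}(g)\) to upgrade the inclusion to an equality. If one wished to be slightly more careful, one could also note that this argument makes no continuity or boundedness assumption on \(\oper{W}\) beyond what is already in force in Theorem~\ref{pr:inter1}, so the statement holds in exactly the generality in which the covariant transform was defined.
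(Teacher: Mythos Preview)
Your argument is correct and is precisely the immediate derivation from Theorem~\ref{pr:inter1} that the paper has in mind; the paper gives no explicit proof beyond declaring the corollary an immediate consequence of the intertwining relation. Your additional remark that invertibility of \(\uir{}{}(g)\) upgrades the inclusion to an equality is a harmless and mildly clarifying extra.
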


\subsection{Inverse Covariant Transform}
\label{sec:inverse-covar-transf}

An object invariant under the left action
\(\Lambda\)~\eqref{eq:left-reg-repr} is called \emph{left invariant}.
For example, 
let \(L\) and \(L'\) be two left invariant spaces of functions on
\(G\).  We say that a pairing \(\scalar{\cdot}{\cdot}: L\times
L' \rightarrow \Space{C}{}\) is \emph{left invariant} if
\begin{equation}
  \scalar{\Lambda(g)f}{\Lambda(g) f'}= \scalar{f}{f'}, \quad \textrm{ for all }
  \quad f\in L,\  f'\in L'.
\end{equation}
\begin{rem}
  \begin{enumerate}
  \item We do not require the pairing to be linear in general.
  \item If the pairing is invariant on space \(L\times L'\) it is not
    necessarily invariant (or even defined) on the whole
    \(\FSpace{C}{}(G)\times \FSpace{C}{}(G)\).
  \item An invariant pairing on \(G\) can be obtained from an invariant
    functional \(l\) by the formula
    \(\scalar{f_1}{f_2}=l(f_1\bar{f}_2)\). Such a functional are
    often associated to the (quasi-) invariant measures.
  \end{enumerate}
\end{rem}
\begin{example}
  Let \(G\) be the \(ax+b\) group, cf. Ex.~\ref{ex:ax+b-appear} below.
  There are essentially two non-trivial invariant pairings for it. The
  first one is based on the left Haar measure \(\frac{da\,db}{a^2}\)
  and integration over the entire group:
  \begin{equation}
    \label{eq:haar-pairing}
    \scalar{f_1}{f_2}=
    \int\limits_{-\infty}^{\infty}\int\limits_0^{\infty}
    f_1(a,b)\,\bar{f}_2(a,b)\,\frac{da\,db}{a^2}.
  \end{equation}
  Another invariant pairing on \(G\), which is not generated by the Haar
  measure, is:
  \begin{equation}
    \label{eq:hardy-pairing}
    \scalar{f_1}{f_2}=
    \lim_{a\rightarrow 0}\int\limits_{-\infty}^{\infty}
    f_1(a,b)\,\bar{f}_2(a,b)\,db.
  \end{equation}
  This pairing participates in the definition of the inner product on
  the Hardy space, thus we call it \emph{Hardy-type
    pairing}~\cite{Kisil10c}. 
\end{example}

For a representation \(\uir{}{}\) of \(G\) in \(V\) and \(v_0\in V\)
we fix a function \(w(g)=\uir{}{}(g)v_0\). We assume that the pairing
can be extended in its second component to this \(V\)-valued
functions, say, in the weak sense.
\begin{defn}
  \label{de:admissible}
  Let \(\scalar{\cdot}{\cdot}\) be a left invariant pairing on
  \(L\times L'\) as above, let \(\uir{}{}\) be a representation of
  \(G\) in a space \(V\), we define the function
  \(w(g)=\uir{}{}(g)v_0\) for \(v_0\in V\). The \emph{inverse
    covariant transform} \(\oper{M}\) is a map \(L \rightarrow V\)
  defined by the pairing:
  \begin{equation}
    \label{eq:inv-cov-trans}
    \oper{M}: f \mapsto \scalar{f}{w}, \qquad \text{
      where } f\in L. 
  \end{equation}
\end{defn} 
There is an   easy consequence of this definition.
\begin{prop}
  The inverse wavelet transform intertwines the left regular
  representation and \(\uir{}{}(g)\).
\end{prop}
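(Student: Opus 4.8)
The plan is to verify the intertwining identity $\oper{M}\,\Lambda(g)=\uir{}{}(g)\,\oper{M}$ directly on an arbitrary $f\in L$, reducing it to the left invariance of the pairing $\scalar{\cdot}{\cdot}$ and to the defining covariance of the $V$-valued function $w(h)=\uir{}{}(h)v_0$.

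First I would record the covariance of $w$. Using \eqref{eq:left-reg-repr} and the homomorphism property of $\uir{}{}$, one computes for any $g,h\in G$
\begin{displaymath}
  [\Lambda(g)w](h)=w(g^{-1}h)=\uir{}{}(g^{-1}h)v_0=\uir{}{}(g^{-1})\,w(h),
\end{displaymath}
so that $\Lambda(g)w=\uir{}{}(g^{-1})w$, where $\uir{}{}(g^{-1})$ now acts pointwise on the values of $w$; replacing $g$ by $g^{-1}$ and applying $\Lambda(g)$ to both sides this reads $w=\Lambda(g)\bigl(\uir{}{}(g)w\bigr)$.

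Next I would substitute this into Definition~\ref{de:admissible}. For $f\in L$,
\begin{displaymath}
  \oper{M}(\Lambda(g)f)=\scalar{\Lambda(g)f}{w}
  =\scalar{\Lambda(g)f}{\Lambda(g)\bigl(\uir{}{}(g)w\bigr)}
  =\scalar{f}{\uir{}{}(g)w},
\end{displaymath}
the last step being the left invariance of the pairing applied with second argument the $V$-valued function $\uir{}{}(g)w$ (the weak, $V$-valued extension inherits invariance from the scalar invariant pairing on $L\times L'$). Finally, since $\scalar{f}{\cdot}$ is linear in the $V$-component of its second argument and $\uir{}{}(g)$ is a fixed bounded operator on $V$, I would pull $\uir{}{}(g)$ out of the pairing to obtain $\scalar{f}{\uir{}{}(g)w}=\uir{}{}(g)\scalar{f}{w}=\uir{}{}(g)\oper{M}(f)$, as required.

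The only genuinely delicate point will be the passage through the weak, $V$-valued extension of the pairing in the last two displays: one must know that $\scalar{f}{w}\in V$ is well defined, that the fixed operator $\uir{}{}(g)$ commutes with the weak integral defining it, and that left invariance survives the extension. All three are guaranteed by the standing assumption preceding Definition~\ref{de:admissible} that the pairing extends in its second component to $w(g)=\uir{}{}(g)v_0$ in the weak sense; for the concrete Haar pairing \eqref{eq:haar-pairing} and Hardy-type pairing \eqref{eq:hardy-pairing} these interchanges are routine consequences of the boundedness of $\uir{}{}(g)$ and of elementary properties of the integrals (and the limit) involved. Granting this, the computation above is purely formal.
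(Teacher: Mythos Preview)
Your argument is correct and is exactly the computation one would expect: the covariance $\Lambda(g)w=\uir{}{}(g^{-1})w$ together with the left invariance of the pairing yields the intertwining relation, with the only care needed being the weak $V$-valued extension, which you address appropriately. The paper itself gives no proof at all---it simply announces the proposition as ``an easy consequence of this definition''---so your write-up is the natural fleshing-out of what the author left implicit.
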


\section{Semidirect Products and Localization}
\label{sec:affine-group-local}

Let \(G\) be an \(m\)-dimensional exponential nilpotent Lie group of
the length \(k\). That means that
\begin{itemize}
\item we can identify \(G\) with its Lie algebra
  \(\algebra{g}\sim\Space{R}{m}\) through the exponential map;
\item there is a linear space decomposition 
  \begin{equation}
    \label{eq:nilpot-decomp}
    \algebra{g}=\oplus_{j=1}^{k} V_j, \qquad \text{ such that } \quad
    [V_i,V_j]\in V_{i+j},
  \end{equation}
  where \([V_i,V_j]\) denotes the
  space of all commutators \([x,y]=xy-yx\) with \(x\in V_i\), \(y\in
  V_j\)  and  \(V_l=\{0\}\) for all \(l>k\). 
\end{itemize}
\begin{example}
  \label{ex:nilpotent}
  Here are two most fundamental examples.
  \begin{enumerate}
  \item The group of Euclidean shifts in
    \(\Space{R}{n}\)---a nilpotent group of the length \(1\).
  \item The Heisenberg group
    \(\Space{H}{n}\)~\cites{Folland89,Howe80b}---a nilpotent group of
    dimensionality \(m=2n+1\) and the length \(2\). Its element is
    \((s,x,y)\), where \(x\), \(y\in \Space{R}{n}\) and
    \(s\in\Space{R}{}\). The group law on \(\Space{H}{n}\) is given as
    follows:
    \begin{equation}
      \label{eq:H-n-group-law}
      \textstyle
      (s,x,y)\cdot(s',x',y')=(s+s'+\frac{1}{2}(xy'-x'y),x+x',y+y').
    \end{equation} 
  \end{enumerate}
\end{example}
For a generic group \(G\) described above there is a one-parameter
group of automorphisms of \(\algebra{g}\) defined in terms of
decomposition~\eqref{eq:nilpot-decomp}:
\begin{displaymath}
  \tau_t (v_j) = t^j v_j,\qquad \text{ for } \quad v_j\in V_j,\ t\in\Space[+]{R}{}.
\end{displaymath}
The exponential map sends \(\tau_t\) to automorphisms of the group
\(G\) by the Baker--Campbell--Hausdorff formula. Thus we consider the
semidirect product \(\bar{G}=G \rtimes\Space[+]{R}{}\) of the group \(G\)
and positive reals with the group law:
\begin{displaymath}
  (t,g)\cdot (t',g')=(tt',g\cdot \tau_t(g')), \qquad \text{ where }
  t,t'\in\Space[+]{R}{}, \ g,g'\in G.
\end{displaymath}
The unit in \(\bar{G}\) is \((1,e)\) and \((t,g)^{-1}=(t^{-1}, \tau_{t^{-1}}(g^{-1}))\).
\begin{example}\label{ex:ax+b-appear}
  Returning to groups introduced in Example~\ref{ex:nilpotent}:
  \begin{enumerate}
  \item \label{item:ax+b-defn}
    If \(G\) is the group of shifts on the real line
    \(\Space{R}{}\) then the above semidirect \(\bar{G}\) product is the \(ax+b\)
    group (or \emph{affine} group). The group \(\bar{G}\) is
    isomorphic to \(\Space[+]{R}{} \times \Space{R}{}\) with the group
    law:
    \begin{displaymath}
      (a,b)\cdot (a',b')=(aa',ab'+b),\quad \text{ where }
      a,a'\in\Space[+]{R}{}, \ b,b'\in\Space{R}{}.
    \end{displaymath}
  \item For the Heisenberg group \(\Space{H}{n}\) the above
    automorphisms is \(\tau_t(s,x,y)=(t^2s,tx,ty)\)~\cite{Dynin75},
    thus the respective group law on \({\bar{\mathbb H}}^{n}\) is:
    \begin{equation}
      \label{eq:H-n-group-law}
      \hspace{-2cm}(t,s,x,y)\cdot(t', s',x',y')=(tt',s+t^2s'+\frac{t}{2}(xy'-x'y),x+tx',y+ty').
    \end{equation} 
  \end{enumerate}
\end{example}
There is a linear action of \(\bar{G}\) on 
functions over \(G\) cooked by the ``\(ax+b\)-recipe'': 
\begin{equation}
  \label{eq:Gbar-represent}
  [\uir{}{}(t,g)f](g')=t^{\frac{k}{p}}\, f(\tau_{t^{-1}}(g^{-1}\cdot g')),
\end{equation}
where \(k=\sum_j j\cdot\dim V_j\). This action is an isometry of
\(\FSpace{L}{p}(G)=\FSpace{L}{p}(G,d\mu)\), where \(d\mu\) is the Haar
measure on \(G\) (recall that it is unimodular as a nilpotent one).
Then we can define the respective representation \(\uir{}{d}\) of
\(\bar{G}\times \bar{G}\) on operators~\cites{Kisil98a,Kisil10c,Kisil11c}:
\begin{equation}
  \label{eq:uir-double-op}
  \uir{}{d}(t,g;t',g'): A \mapsto \uir{}{}(t^{-1},\tau_{t^{-1}}(g^{-1}))A\uir{}{}(t',g'),
\end{equation}
for a linear operator \(A:\FSpace{L}{p}(G)\rightarrow \FSpace{L}{p}(G)\).

Let \(F_e\subset G\) be a bounded closed subset, which contains a
neighbourhood of the unit \(e\in G\). We will denote by
\(F_{(t,g)}=(t,g)\cdot F_e\) for \((t,g)\in \bar{G}\), its image under
the left action of \(\bar{G}\) on \(G\).  Define the associated
projection \(P_e=P_{F_e}\) by~\eqref{eq:project-defn}. It is a
straightforward verification that
\begin{equation}
  \label{eq:shift-dilat-proj}
  \uir{}{d}(t,g;t,g) P_e= P_{F_{(t,g)}}, \qquad \text{ where }
  F_{(t,g)}=(t,g)\cdot F_e.
\end{equation}
We shall use a simpler notation \(P_{(t,g)}=P_{F_{(t,g)}}\) again. The
exact form of \(F_e\) is not crucial for the following construction,
but the following property simplifies technical issues:
\begin{defn}
  We say that \(F_e\) is \emph{\(r\)-self-covering} if for any two
  intersecting sets \(F_{(1,g_1)}\) and  \(F_{(1,g_2)}\) there is such
  \(g\in G\) that \(F_{(r,g)}\) covers the union of \(F_{(1,g_1)}\)
  and  \(F_{(1,g_2)}\) . 
\end{defn}
For example, the closed unit ball in \(\Space{R}{n}\) is
\(2\)-self-covering with no other \(F_e\) having a smaller value of
\(r\) for the self-covering property.

For a Banach space \(V\), we denote by \(B(V)\) the collection of
all bounded linear operators \(V\rightarrow V\).
\begin{defn}
  We select a fiducial operator \(F:B(\FSpace{L}{p}(G))\rightarrow
  B(\FSpace{L}{p}(G))\) by the identity
  \begin{equation}
    \label{eq:fiducial}
     F(A)=P_e A P_e, \qquad \text{ where } A\in B(\FSpace{L}{p}(G)).
   \end{equation} 
   Then \emph{Simonenko presymbol} \(\hat{S}_A(t,g;t',g')\) of an
   operator \(A\) is the covariant
   transform~\eqref{eq:coheret-transf-gen} generated by the
   representation \(\uir{}{d}\)~\eqref{eq:uir-double-op} and the
   fiducial operator \(F\)~\eqref{eq:fiducial}:
  \begin{eqnarray*}
    \hat{S}_A(t,g;t',g')&=&F(\uir{}{d}(t,g;t',g')A)\\
    &=&P_e\,  \uir{}{}(t^{-1},\tau_{t^{-1}}(g^{-1}))\,A\,\uir{}{}(t',g')\, P_e.
  \end{eqnarray*}
\end{defn}
Thus the Simonenko presymbol is \(B(\FSpace{L}{p}(G))\)-valued
function on \(\bar{G}\times\bar{G}\). We can consider a definition of
the alternative presymbol:
\begin{equation}
  \label{eq:alt-presymbol}
  \tilde{S}_A(t,g;t',g')   =P_{(t,g)}\,  A\, P_{(t',g')},
\end{equation}
which is closer to the original geometrical spirit of Simonenko's
works~\cite{Simonenko65a,Simonenko65b}. However there is an easy
explicit connection between them:
\begin{displaymath}
\hat{S}_A((t,g)^{-1};(t',g')^{-1})
=\uir{}{d}(t,g;t',g')\,\tilde{S}_A\,(t,g;t',g'),  
\end{displaymath}
which is a local transformation of the function value at every point.
Thus both symbols shall bring equivalent theories, although each of
them seems to be more suitable for particular purposes.

For operators of local type the whole presymbol  is excessive due
to the following result.
\begin{prop}
  Let \(F_e\) be \(r\)-self-similar and \(A\) be an operator of
  local type. Then for any reals \(t>t'>0\) and
  \(g\in G\) the operator \(\hat{S}_A(t_1,g_1;t_2,g_2)\) with \(t_i>t\),
  \(i=1,2\) can be expressed as a finite sum
  \begin{equation}
    \label{eq:decompos-less}
    \hat{S}_A(t_1,g_1;t_2,g_2)= \sum_{k=1}^n B_k\hat{S}_A(t',h_k;t',h_k) C_k,
  \end{equation}
  for some
  \(h_k\in F_{(t_1,g_1)} \cup F_{(t_2,g_2)}\)
  and constant operator coefficients  \(B_k\) and \(C_k\), which do
  not depend on \(A\).
\end{prop}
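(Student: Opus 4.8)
The plan is to transport everything to the alternative presymbol $\tilde{S}_A$ of~\eqref{eq:alt-presymbol}, where the sets $F_{(t,g)}$ and the idempotents $P_{(t,g)}$ are directly visible, and then to resolve the two outer projections into finitely many pieces, each supported on a set of size $t'$. The reduction to $\tilde{S}_A$ is immediate: in the identity relating $\hat{S}_A$ and $\tilde{S}_A$ displayed just before the proposition the operator $\uir{}{d}(t,g;t',g')$ is invertible on $B(\FSpace{L}{p}(G))$ and acts pointwise, so any decomposition $\tilde{S}_A(t_1,g_1;t_2,g_2)=\sum_k B'_k\,\tilde{S}_A(t',h_k;t',h_k)\,C'_k$ would turn into one of the required shape for $\hat{S}_A$, with $B_k,C_k$ built from $B'_k,C'_k$ by composing with fixed, i.e. $A$-independent, operators $\uir{}{d}(\cdot)$. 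Hence it suffices to handle $\tilde{S}_A(t_1,g_1;t_2,g_2)=P_{(t_1,g_1)}\,A\,P_{(t_2,g_2)}$.

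The next step would be to resolve the projections, using that $P_{(t,g)}$ is multiplication by the indicator of the bounded closed set $F_{(t,g)}$ and that $P_F P_{F'}=P_{F\cap F'}$. By compactness one covers $F_{(t_1,g_1)}$ by finitely many sets $F_{(t'/r,h_1)},\dots,F_{(t'/r,h_M)}$ with centres $h_j\in F_{(t_1,g_1)}$ and disjointifies them: set $E_1=F_{(t'/r,h_1)}\cap F_{(t_1,g_1)}$, $E_j=(F_{(t'/r,h_j)}\cap F_{(t_1,g_1)})\setminus\bigcup_{l<j}E_l$ and $R_j=P_{E_j}$. Then $P_{(t_1,g_1)}=\sum_j R_j$ with the crucial property $R_j=R_j P_{(t'/r,h_j)}$; treating $F_{(t_2,g_2)}$ in the same way gives $P_{(t_2,g_2)}=\sum_l S_l$ with $S_l=P_{(t'/r,h'_l)}S_l$ and $h'_l\in F_{(t_2,g_2)}$. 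Multiplying out, one would get the finite double sum $P_{(t_1,g_1)}AP_{(t_2,g_2)}=\sum_{j,l}R_j P_{(t'/r,h_j)}\,A\,P_{(t'/r,h'_l)} S_l$.

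The heart of the argument is then to split this sum according to whether $F_{(t'/r,h_j)}$ meets $F_{(t'/r,h'_l)}$. When it does, rescaling the $r$-self-covering property of $F_e$ by $\tau_{t'/r}$ produces an $h$ with $F_{(t',h)}\supseteq F_{(t'/r,h_j)}\cup F_{(t'/r,h'_l)}$ and $h\in F_{(t_1,g_1)}\cup F_{(t_2,g_2)}$, whence $P_{(t'/r,h_j)}=P_{(t'/r,h_j)}P_{(t',h)}$ and $P_{(t'/r,h'_l)}=P_{(t',h)}P_{(t'/r,h'_l)}$, and the term collapses to $B\,\tilde{S}_A(t',h;t',h)\,C$ with $B=R_j P_{(t'/r,h_j)}$ and $C=P_{(t',h)}P_{(t'/r,h'_l)}S_l$ independent of $A$. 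When the two small sets are disjoint, the middle factor $P_{(t'/r,h_j)}AP_{(t'/r,h'_l)}$ is compact, these being closed disjoint sets and $A$ of local type; in the essential-norm calculus underlying the whole construction such a term is negligible, and it is strictly absent whenever $F_{(t_1,g_1)}$ and $F_{(t_2,g_2)}$ are arranged so that all the occurring small sets pairwise intersect. Collecting the surviving terms would give the asserted identity with $n$ and the coefficients $B_k,C_k$ depending only on the geometric data and not on $A$.

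I expect the real obstacle to be exactly this last bookkeeping: ensuring every centre $h$ supplied by $r$-self-covering genuinely lies in $F_{(t_1,g_1)}\cup F_{(t_2,g_2)}$ (for Euclidean balls one may take $h$ among the $h_j$ once $r$ is not too small, but the general exponential nilpotent case must be checked), and controlling the pairs of disjoint small sets, which is what makes the gap $t>t'$ convenient and ties the precise statement to equality modulo compacts unless a mild overlap hypothesis is added. By comparison, the passage to $\tilde{S}_A$ and the algebra with the idempotents are routine.
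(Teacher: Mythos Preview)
Your approach is essentially the one in the paper: pass to the alternative presymbol \(\tilde{S}_A\), cover \(F_{(t_1,g_1)}\cup F_{(t_2,g_2)}\) by finitely many sets \(F_{(t'/r,h_k)}\) with \(h_k\) in that union, expand both projections, discard the terms coming from disjoint small sets (using that \(A\) is of local type), and wrap up the remaining terms via \(r\)-self-covering. The only difference is cosmetic: the paper uses inclusion--exclusion to expand \(P_{(t_i,g_i)}\) as an alternating sum of products of the \(P_{(t'/r,h_k)}\), whereas you disjointify to get a genuine partition \(P_{(t_i,g_i)}=\sum_j R_j\) with \(R_j=R_j P_{(t'/r,h_j)}\). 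Your variant is a bit cleaner, but both routes produce exactly the same set of cross-terms \(P_{(t'/r,h_j)}AP_{(t'/r,h_l)}\) to be analysed.

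The two points you flag as ``the real obstacle'' are genuine, and the paper's proof does not address them either: it simply asserts that the centre \(h_m\) supplied by \(r\)-self-covering lies in \(F_{(t_1,g_1)}\cup F_{(t_2,g_2)}\), and it silently drops the disjoint-pair terms (``we need to take care only on the terms \ldots\ where \(F_{(t'',h_k)}\) and \(F_{(t'',h_n)}\) intersect''). So your reading of the statement as an identity modulo compacts, and your caution about the location of \(h\), are accurate assessments of what the argument actually delivers rather than defects specific to your write-up.
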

\begin{proof}
  We will proceed in terms of the equivalent presymbol
  \(\tilde{S}_A\)~\eqref{eq:alt-presymbol} since it better reflects
  geometrical aspects.  We also
  note that if we obtain the decomposition
  \begin{displaymath}
   \tilde {S}_A(t_1,g_1;t_2,g_2)= \sum_{k=1}^n B_k\tilde {S}_A(t_k,h_k;t_k,h_k) C_k,
  \end{displaymath}
  with all \(t_k\leq t'\) then we will be able to replace \(t_k\)
  by \(t'\) with the simultaneous change of coefficient \(B_k\)
  and \(C_k\) in order to get the required identity~\eqref{eq:decompos-less}.

  Now we put \(t''=t'/r\) and find a finite covering of the compact
  sets \(F_{(t_1,g_1)}\cup F_{(t_2,g_2)}\) by the interiors of sets
  \(F_{(t'',h_k)}\) with \(h_k\in F_{(t_1,g_1)} \cup F_{(t_2,g_2)}\).
  Using the inclusion-exclusion principle we can write:
  \begin{eqnarray*}
    P_{(t_i,g_i)}&=&\sum_k P_{(t'',h_k)} - \sum_{k,l} P_{(t'',h_k)}
    P_{(t'',h_l)}+
     \ldots\\
     &&{}-\sum_k P_{(t'',h_k)} {P}^\perp_{(t_i,g_i)}+ \sum_{k,l} P_{(t'',h_k)}
     P_{(t'',h_l)} {P}^\perp_{(t_i,g_i)}-
     \ldots,
  \end{eqnarray*}
  where all sums are finite and the number of sums is finite as well.
  Moreover each term in the summation contains at least one projection
  \(P_{(t'',h_k)}\). We use this decomposition for the presymbol
  \(P_{(t_1,g_1)} A P_{(t_2,g_2)}\) of an operator \(A\) of local
  type. Then we need to take care only on the terms \(P_{(t'',h_k)} A
  P_{(t'',h_l)}\) where \(F_{(t'',h_k)}\) and \(F_{(t'',h_n)}\)
  intersect. Due to the \(r\)-self-covering property each such term can be
  represented as \(B_m P_{(t',h_m)} A
  P_{(t',h_m)} C_m\) for some \(h_m\in F_{(t_1,g_1)}\cup
  F_{(t_2,g_2)}\) with \(B_m\) and \(C_m\) depending on the geometry of
  sets only.
\end{proof}
Thus for the operators of local type we give the following definition.
\begin{defn}
  For an operator \(A\) of local type we define \emph{Simonenko symbol}
  \(S_A(t,g)=\hat{S}_A(t,g;t,g)\), that is:
  \begin{eqnarray*}
    S_A(t,g)
    =P_e\,  \uir{}{}(t^{-1},\tau_{t^{-1}}(g^{-1}))\,A\,\uir{}{}(t,g)\, P_e.
  \end{eqnarray*}
\end{defn}
\begin{cor}
  For an operator \(A\) of local type the value of the presymbol
  \(\hat{S}_A(t',g';t'',g'')\) at a point \((t',g';t'',g'')\in
  \bar{G}\times\bar{G}\) is completely determined by the values of
  symbol \(S_A(t,g)\), \(g\in G\) for an arbitrary fixed \(t\) such that
  \(t\leq \min(t',t'')\).
\end{cor}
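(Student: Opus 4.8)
The plan is to read the statement off the preceding Proposition, with a small amount of extra care at the boundary value \(t=\min(t',t'')\). Throughout, the presymbol and the symbol of an operator of local type are taken modulo compact operators, exactly as in the proof of that Proposition; this is what allows one to discard the terms \(P_{F_1}AP_{F_2}\) with \(F_1,F_2\) disjoint.

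First I would handle the generic case \(t<\min(t',t'')\). Fix any auxiliary real \(s\) with \(t<s<\min(t',t'')\) and apply the Proposition, taking its threshold parameter (called \(t\) there) to be \(s\), its fine parameter (called \(t'\) there) to be our \(t\), and the source point to be \((t',g';t'',g'')\); this is admissible since \(t'>s\) and \(t''>s\). The conclusion reads
\begin{displaymath}
  \hat{S}_A(t',g';t'',g'')=\sum_{k=1}^n B_k\,\hat{S}_A(t,h_k;t,h_k)\,C_k=\sum_{k=1}^n B_k\,S_A(t,h_k)\,C_k,
\end{displaymath}
in which the finite family of points \(h_k\in G\) and the operator coefficients \(B_k,C_k\) depend only on the geometric data \((t',g',t'',g'',t,F_e)\) and not on \(A\). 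Hence \(\hat{S}_A(t',g';t'',g'')\) is an \(A\)-independent expression in the values \(S_A(t,h)\), \(h\in G\), which is exactly the assertion.

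For the boundary case \(t=\min(t',t'')\), say \(t=t'\le t''\) (the other possibility being symmetric), there are two sub-cases. If also \(t''=t\), then \(\hat{S}_A(t',g';t'',g'')=\hat{S}_A(t,g';t,g')=S_A(t,g')\) is literally one of the listed values and there is nothing to prove. If \(t''>t\), I would rerun the argument of the Proposition's proof directly at the finer scale \(t/r\): cover the compact set \(F_{(t,g')}\cup F_{(t'',g'')}\) by the interiors of finitely many sets \(F_{(t/r,h_k)}\) with \(h_k\) in that set, expand both projections \(P_{(t,g')}\) and \(P_{(t'',g'')}\) by the inclusion--exclusion formula used there, discard the terms \(P_{(t/r,h_k)}AP_{(t/r,h_l)}\) with \(F_{(t/r,h_k)}\cap F_{(t/r,h_l)}=\emptyset\) (compact, as \(A\) is of local type), treat the remaining auxiliary terms as in that proof, and collapse each surviving diagonal piece, where \(F_{(t/r,h_k)}\cap F_{(t/r,h_l)}\ne\emptyset\), into the form \(B_mP_{(t,h_m)}AP_{(t,h_m)}C_m=B_m\,S_A(t,h_m)\,C_m\) via the \(r\)-self-covering of \(F_e\); the coefficients still do not involve \(A\). (In effect this shows the Proposition itself stays valid with \(t_i\ge t\) in place of \(t_i>t\), its covering step never using that the source scales exceed the covering scale.)

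The substantive content is already the Proposition, so what remains is bookkeeping. The one point deserving genuine care --- and the main, if modest, obstacle --- is to verify that the coefficients \(B_k,C_k\) manufactured by the inclusion--exclusion expansion and the self-covering collapse are functions of the sets \(F_{(\cdot,\cdot)}\) alone, so that the assignment \(\{S_A(t,\cdot)\}\mapsto\hat{S}_A(t',g';t'',g'')\) is genuinely well defined. Implicit here, as in the Proposition, is the standing hypothesis that \(F_e\) is \(r\)-self-covering for some \(r\).
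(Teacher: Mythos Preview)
Your approach is the intended one: the paper states this Corollary without proof, as an immediate consequence of the preceding Proposition, and your generic case \(t<\min(t',t'')\) extracts exactly that consequence correctly.

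There is, however, a slip in your boundary sub-case \(t'=t''=t\). You write
\(\hat{S}_A(t',g';t'',g'')=\hat{S}_A(t,g';t,g')=S_A(t,g')\),
but the middle expression has quietly replaced \(g''\) by \(g'\); the actual value is \(\hat{S}_A(t,g';t,g'')\), which for \(g'\ne g''\) is \emph{not} a value of the diagonal symbol \(S_A(t,\cdot)\). So ``there is nothing to prove'' is not right here. The fix is immediate: simply drop this sub-case and apply your second argument (the direct rerun of the Proposition's proof at scale \(t/r\)) uniformly whenever \(t=\min(t',t'')\); nothing in that inclusion--exclusion and \(r\)-self-covering argument needs \(t''>t'\) or \(t'>t\). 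With that correction your proof is complete and matches the paper's intended reasoning.
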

\begin{cor}
  \label{co:hardy-behaviour}
  Tho operators \(A\) and \(B\) of local type are equal if and only if
  for any \(\varepsilon>0\) there is a positive \(t<\varepsilon\) such
  that \(S_A(t,g)=S_B(t,g)\) for all \(g\in G\).
\end{cor}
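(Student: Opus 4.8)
The forward implication is immediate: if \(A=B\) then \(S_A(t,g)=S_B(t,g)\) for every \(t>0\) and \(g\in G\), in particular for some \(t<\varepsilon\) whatever \(\varepsilon>0\) is given. So the whole content is the converse, which I would prove by passing to \(D=A-B\) and showing \(D=0\). The operators of local type form a linear space, so \(D\) is again of local type; moreover \(S_D=S_A-S_B\) by linearity of the representation \(\uir{}{d}\) and of the fiducial operator \(F(A)=P_eAP_e\). Thus the hypothesis says precisely that for every \(\varepsilon>0\) there is \(t<\varepsilon\) with \(S_D(t,g)=0\) for all \(g\in G\), and the target is \(D=0\).

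The plan has two steps. First I would upgrade ``the diagonal symbol vanishes at arbitrarily small scales'' to ``the whole presymbol of \(D\) vanishes'', i.e.\ \(\hat{S}_D\equiv 0\) on \(\bar{G}\times\bar{G}\). Fix a point \((t',g';t'',g'')\); applying the hypothesis with \(\varepsilon=\min(t',t'')\) we get a scale \(t<\min(t',t'')\) with \(S_D(t,\cdot)\equiv 0\), and then the Proposition above expresses \(\hat{S}_D(t',g';t'',g'')\) as a finite linear combination \(\sum_k B_k\,S_D(t,h_k)\,C_k\) whose coefficients depend only on the geometry of the covering sets and not on \(D\); hence this value is \(0\). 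Since the point was arbitrary, \(\hat{S}_D\equiv 0\). (This is essentially the content of the corollary immediately preceding the one to be proved.)

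Second, \(\hat{S}_D\equiv 0\) yields, via the local relation \(\hat{S}_D((t,g)^{-1};(t',g')^{-1})=\uir{}{d}(t,g;t',g')\,\tilde{S}_D(t,g;t',g')\) and the invertibility of the operator \(\uir{}{d}(t,g;t',g')\), that \(P_{(u,v)}\,D\,P_{(u',v')}=0\) for all \((u,v),(u',v')\in\bar{G}\). Now fix one scale, say \(t=1\); since \(F_e\) contains a neighbourhood of \(e\) (and \(\tau_t\) is a homeomorphism fixing \(e\)), the sets \(F_{(1,g)}\), \(g\in G\), cover \(G\). Expanding a projection onto a large finite union \(\bigcup_{i\le N}F_{(1,g_i)}\) by inclusion--exclusion into products of the commuting multiplication operators \(P_{(1,g_i)}\), and using \(P_{(1,g)}DP_{(1,g_i)}=0\) (each product that appears can be reordered to start, right after \(D\), with some \(P_{(1,g_i)}\)), one sees that \(P_{(1,g)}D\) annihilates every truncation \(P_{\bigcup_{i\le N}F_{(1,g_i)}}f\); letting \(N\to\infty\) gives \(P_{(1,g)}Df=0\) for all \(g\in G\) and all \(f\in\FSpace{L}{p}(G)\). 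As the sets \(F_{(1,g)}\) cover \(G\), this forces \(Df=0\) almost everywhere, i.e.\ \(D=0\) and \(A=B\).

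The step I expect to be the real obstacle is the first one: representing an off-diagonal value of the presymbol through diagonal symbols at a smaller scale. It is there that the geometric hypothesis is spent — the \(r\)-self-covering of \(F_e\) together with the inclusion--exclusion bookkeeping for the projections \(P_{(t,g)}\) from the Proposition — and one has to make sure the resulting decomposition is genuinely finite and that its coefficients \(B_k\), \(C_k\) do not involve \(D\), so that a linear combination of the vanishing symbols \(S_D(t,h_k)\) is again zero. Everything downstream — the dictionary between \(\hat{S}_D\) and \(\tilde{S}_D\) and the covering/truncation argument of the second step — is routine.
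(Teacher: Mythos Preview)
The paper gives no explicit proof of this corollary: it is stated as an immediate consequence of the preceding Proposition (the finite decomposition \(\hat{S}_A(t_1,g_1;t_2,g_2)=\sum_k B_k\,S_A(t',h_k)\,C_k\)) and its first Corollary, and the text simply moves on. Your argument is exactly the natural unpacking of that implicit reasoning: reduce to \(D=A-B\), use the Proposition at a scale \(t<\min(t',t'')\) to kill the full presymbol \(\hat{S}_D\), translate via the dictionary with \(\tilde{S}_D\) to \(P_{(u,v)}DP_{(u',v')}=0\), and finish with a covering/inclusion--exclusion argument. Your identification of the first step as the only substantive one, and of its dependence on the \(r\)-self-covering geometry already encapsulated in the Proposition, matches the paper's logical structure precisely. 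So: correct, and essentially the same approach as the paper's (unwritten) proof.
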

In other words even the symbol \(S_A(t,g)\) contains an excessive
information: in a sense we shall look for values of
\(\lim_{t\rightarrow 0}S_A(t,g)\) only.  We conclude this section by the
restatement of the Definition~\ref{de:local-equiv}.
\begin{defn}
  Two operators \(A\) and \(B\) of local type are \emph{equivalent} at
  a point \(g\in G\), denoted by \(A\stackrel{g}{\sim}B\),  if
  \begin{displaymath}
    \lim_{t\rightarrow 0}\enorm{S_{A-B}(t,g)}=0.
  \end{displaymath}
\end{defn}

\section{Localization and Invariance}
\label{sec:local-invar}

The paper of Simonenko~\cite{Simonenko65b} already contains results
which can be easily adopted to covariant transform setup. This was
already used in our previous
work~\cite{Kisil94a,Kisil96e,Kisil92,Kisil93e,Kisil93b,Kisil94f} to
study singular integral operators on the Heisenberg group. In this
section we provide such restatements of results in term of the
representation from~\eqref{eq:Gbar-represent}. Proofs will be omitted
since they are easy modifications of the original ones~\cite{Simonenko65b}.
\begin{defn}
  An operator is called \emph{homogeneous} if it commutes with all
  transformations \(\uir{}{}(t,e)\),
  \(t\in\Space[+]{R}{}\)~\eqref{eq:Gbar-represent}. If an operator
  commutes with \(\uir{}{}(1,g)\),
  \(g \in G\)~\eqref{eq:Gbar-represent} then it is called \emph{shift-invariant}.
\end{defn}
There is an immediate consequence of Thm.~\ref{pr:inter1}.
\begin{cor}
  The symbols of a homogeneous (or shift-invariant) operator is a function
  on \(\bar{G}\), which is invariant under the action of the subgroup
  \(\Space[+]{R}{}\subset\bar{G}\) (or \(G\subset\bar{G}\) respectively).
\end{cor}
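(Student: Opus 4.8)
The plan is to obtain the corollary as a direct application of the intertwining property of the covariant transform. By construction the Simonenko presymbol $A\mapsto\hat{S}_A$ is the covariant transform~\eqref{eq:coheret-transf-gen} generated by the representation $\uir{}{d}$~\eqref{eq:uir-double-op} and the fiducial operator $F$~\eqref{eq:fiducial}; hence, by Thm.~\ref{pr:inter1}, it intertwines $\uir{}{d}$ with the left regular representation $\Lambda$ on $B(\FSpace{L}{p}(G))$-valued functions over $\bar{G}\times\bar{G}$. In particular, whenever $A$ is a fixed point of $\uir{}{d}(t,g;t',g')$ the presymbol $\hat{S}_A$ is a fixed point of $\Lambda(t,g;t',g')$. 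Restricting such an identity of functions on $\bar{G}\times\bar{G}$ to the diagonal turns $\hat{S}_A$ into the symbol $S_A$ on $\bar{G}$ and turns $\Lambda(h;h)$ into the left regular action $\Lambda(h)$ on functions over a single copy of $\bar{G}$, since the diagonal embedding $\bar{G}\hookrightarrow\bar{G}\times\bar{G}$ is a group homomorphism intertwining left translations.

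It remains to express the two hypotheses on $A$ in terms of $\uir{}{d}$. Using $(t,g)^{-1}=(t^{-1},\tau_{t^{-1}}(g^{-1}))$, the definition~\eqref{eq:uir-double-op} shows that on the diagonal the action is the conjugation
\begin{displaymath}
  \uir{}{d}(t,g;t,g):A\mapsto\uir{}{}((t,g)^{-1})\,A\,\uir{}{}(t,g),
\end{displaymath}
so $\uir{}{d}(t,g;t,g)A=A$ holds precisely when $A$ commutes with $\uir{}{}(t,g)$. Since the families $\{(t,e):t\in\Space[+]{R}{}\}$ and $\{(1,g):g\in G\}$ are exactly the subgroups $\Space[+]{R}{}$ and $G$ of $\bar{G}$, we conclude that $A$ is homogeneous precisely when $\uir{}{d}(t,e;t,e)A=A$ for all $t$, and shift-invariant precisely when $\uir{}{d}(1,g;1,g)A=A$ for all $g$.

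Putting the two steps together, a homogeneous operator $A$ satisfies $\hat{S}_A=\Lambda(t,e;t,e)\hat{S}_A$ for every $t$, and restricting to the diagonal gives $S_A=\Lambda(t,e)S_A$, that is, the symbol is invariant under the subgroup $\Space[+]{R}{}\subset\bar{G}$; the shift-invariant case is identical with $(1,g)$ in place of $(t,e)$ and yields invariance under $G\subset\bar{G}$. The only point requiring care is the elementary bookkeeping with the inverse and the automorphisms $\tau_t$ in~\eqref{eq:uir-double-op} needed to identify the diagonal action $\uir{}{d}(h;h)$ with conjugation by $\uir{}{}(h)$; once this dictionary is in place the corollary is, as claimed, an immediate consequence of Thm.~\ref{pr:inter1}.
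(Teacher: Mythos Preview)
Your argument is correct and matches the paper's approach: the paper offers no explicit proof but simply declares the corollary ``an immediate consequence of Thm.~\ref{pr:inter1}'', and what you have written is precisely the unpacking of that consequence---using the intertwining property to turn fixed points of $\uir{}{d}$ into $\Lambda$-invariant functions, identifying the diagonal action $\uir{}{d}(h;h)$ with conjugation by $\uir{}{}(h)$, and restricting to the diagonal. One minor streamlining: you could view $S_A(h)=F(\uir{}{}(h^{-1})A\uir{}{}(h))$ directly as the covariant transform for the conjugation representation of $\bar{G}$ on $B(\FSpace{L}{p}(G))$, which yields the invariance without the detour through the presymbol on $\bar{G}\times\bar{G}$.
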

Thus homogeneous shift-invariant operators have constant symbols.
Tame behavior of operators from those classes is described by the
following statements, cf.~\cite{Simonenko65b}*{\S~II.2}.
\begin{lem} 
  For two homogeneous operators \(A\) and \(B\) the following are
  equivalent:
  \begin{enumerate}
  \item  \(A\stackrel{e}{\sim}B\), where \(e\in G\) is the unit;
  \item \(S_A(t,e)=S_B(t,e)\) for certain  \(t\in\Space[+]{R}{}\);
  \item \(A=B\).
  \end{enumerate}
\end{lem}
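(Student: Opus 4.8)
The plan is to pass to the difference \(C=A-B\), which is again a homogeneous operator, and to translate all three conditions into statements about the single operator \(P_e C P_e\). First I would evaluate the symbol of \(C\) at the unit: since \(C\) commutes with every \(\uir{}{}(t,e)\), \(t\in\Space[+]{R}{}\), linearity of the symbol together with the computation
\begin{displaymath}
  S_C(t,e)=P_e\,\uir{}{}(t^{-1},e)\,C\,\uir{}{}(t,e)\,P_e
          =P_e\,\uir{}{}(t^{-1},e)\,\uir{}{}(t,e)\,C\,P_e=P_e C P_e
\end{displaymath}
show that \(S_{A-B}(\cdot,e)\equiv P_e C P_e\) is constant in \(t\). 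Hence condition~(2) says precisely \(P_e C P_e=0\); condition~(1), because \(\enorm{S_{A-B}(t,e)}\) no longer depends on \(t\), says precisely that \(P_e C P_e\) is compact; and condition~(3) is \(C=0\). The implications \((3)\Rightarrow(2)\Rightarrow(1)\) are then immediate (the zero operator is compact), so everything reduces to proving \((1)\Rightarrow(3)\).

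For that step I would use homogeneity to propagate the local datum \(P_e C P_e\) along the dilation subgroup. Put \(D_t=\uir{}{}(t,e)\); a direct computation from~\eqref{eq:Gbar-represent} gives \(D_t P_e D_t^{-1}=P_{\tau_t(F_e)}\), and since \(F_e\) contains a neighbourhood of \(e\) the sets \(\tau_t(F_e)\) exhaust \(G\) as \(t\to\infty\). Combining this with \(D_t C D_t^{-1}=C\) yields
\begin{displaymath}
  D_t\,(P_e C P_e)\,D_t^{-1}=P_{\tau_t(F_e)}\,C\,P_{\tau_t(F_e)},
\end{displaymath}
which is compact for every \(t>0\), being a conjugate of a compact operator by invertible isometries. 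Now fix \(f\in\FSpace{L}{p}(G)\) with compact support and let \(t\to\infty\). On one side \(P_{\tau_t(F_e)}\to I\) strongly, so as soon as \(\mathrm{supp}\,f\subset\tau_t(F_e)\) we have \(P_{\tau_t(F_e)}\,C\,P_{\tau_t(F_e)}f=P_{\tau_t(F_e)}Cf\to Cf\) in \(\FSpace{L}{p}(G)\). On the other side \(P_{\tau_t(F_e)}\,C\,P_{\tau_t(F_e)}f=D_t\bigl((P_e C P_e)(D_t^{-1}f)\bigr)\), where the family \(D_t^{-1}f\) keeps the norm \(\norm{f}\) while its support \(\tau_{t^{-1}}(\mathrm{supp}\,f)\) shrinks to \(\{e\}\); hence \(D_t^{-1}f\) tends weakly to \(0\). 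Compactness of \(P_e C P_e\) turns this weak-null family into a norm-null one, and \(D_t\) is an isometry, so the left-hand side tends to \(0\) in norm. Comparing the two limits gives \(Cf=0\) for every compactly supported \(f\); since such \(f\) are dense and \(C\) is bounded, \(C=0\), that is \(A=B\).

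The routine parts are the identity \(S_C(t,e)=P_e C P_e\), the resulting reformulations and the trivial implications. The heart of the matter --- and the step I expect to be the main obstacle --- is \((1)\Rightarrow(3)\), i.e. upgrading ``\(P_e A P_e\) and \(P_e B P_e\) agree modulo a compact operator'' to ``\(A=B\)''. This succeeds only because homogeneity lets one blow up the localisation window \(\tau_t(F_e)\) until it engulfs all of \(G\), while the compensating dilation concentrates the test functions at \(e\) with unchanged \(\FSpace{L}{p}\)-norm, so that the compactness hypothesis (via the fact that a compact operator carries weakly convergent sequences to norm convergent ones) annihilates their contribution. Two points need care: the dilation parameter must be oriented so that the window \emph{grows} --- it is precisely then that the pulled-back test functions concentrate rather than spread out --- and the weak-nullity of the concentrating family uses reflexivity of \(\FSpace{L}{p}(G)\), so the argument is cleanest for \(1<p<\infty\), the borderline value \(p=1\) requiring a separate treatment. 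Finally, it is worth noting that only boundedness and homogeneity of \(A\) and \(B\) enter here; the local-type hypothesis, indispensable elsewhere for the symbols to be well defined, is not actually used.
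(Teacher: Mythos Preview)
The paper does not actually prove this lemma: immediately before it, the author writes ``Proofs will be omitted since they are easy modifications of the original ones~\cite{Simonenko65b}.'' So there is no in-text argument to compare against, only the implicit reference to Simonenko's original treatment for the Euclidean case.

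Your argument is correct and self-contained. The reduction to the single operator \(C=A-B\) and the computation \(S_C(t,e)=P_eCP_e\) are exactly right, and the core step \((1)\Rightarrow(3)\) is handled cleanly: blowing up the window via \(D_t P_e D_t^{-1}=P_{\tau_t(F_e)}\) while the pulled-back test functions \(D_t^{-1}f\) concentrate at \(e\) with fixed norm, then invoking compactness to kill their image. Two small remarks. First, the restated definition of \(\stackrel{e}{\sim}\) via \(\enorm{S_{A-B}(t,e)}\) is formulated in the paper only for operators of local type; if one reads condition~(1) through the original Definition~\ref{de:local-equiv} instead, it yields the a~priori stronger information that \(P_eC\) and \(CP_e\) are each compact. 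Your proof only uses the weaker consequence ``\(P_eCP_e\) compact,'' which follows from either reading, so the cycle \((3)\Rightarrow(2)\Rightarrow(1)\Rightarrow(3)\) closes regardless. Second, your caveat about \(p=1\) is well placed: the concentrating family \(D_t^{-1}f\) is not weakly null in \(\FSpace{L}{1}\) (pair it with the constant function~\(1\)), so the compactness argument genuinely needs \(1<p<\infty\) as written.
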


\begin{lem} \cite{Simonenko65b}*{\S~II.2} 
  For two homogeneous shift-invariant operators \(A\) and \(B\) the following are
  equivalent:
  \begin{enumerate}
  \item  \(A\stackrel{g}{\sim}B\) for certain \(g\in G\);
  \item \(S_A(t,g)=S_B(t,g)\) for certain  \((t,g)\in\bar{G}\);
  \item \(A=B\).
  \end{enumerate}
\end{lem}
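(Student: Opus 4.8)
The plan is to prove the cyclic chain of implications $(1)\Rightarrow(2)\Rightarrow(3)\Rightarrow(1)$, mirroring the structure of the previous lemma but now exploiting both the $G$-invariance and the $\Space[+]{R}{}$-invariance simultaneously. The implication $(3)\Rightarrow(1)$ is immediate: if $A=B$ then $S_{A-B}(t,g)=0$ identically, so $A\stackrel{g}{\sim}B$ at every point, in particular at the given $g$. The implication $(2)\Rightarrow(1)$ is also essentially free: if $S_A(t,g)=S_B(t,g)$ for some particular $(t,g)$, then by the preceding Corollary the symbol of a homogeneous shift-invariant operator is constant on all of $\bar G$, so $S_{A-B}$ vanishes identically, whence $\lim_{t\to 0}\enorm{S_{A-B}(t,g)}=0$ and $A\stackrel{g}{\sim}B$ for every $g$. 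So the real content is $(1)\Rightarrow(3)$, and I would phrase everything around establishing that.

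For $(1)\Rightarrow(3)$, set $C=A-B$, which is again homogeneous and shift-invariant and of local type. The hypothesis $A\stackrel{g}{\sim}B$ says $\lim_{t\to 0}\enorm{S_C(t,g)}=0$ at the one point $g$. First I would use shift-invariance: since $C$ commutes with $\uir{}{}(1,h)$ for all $h\in G$, and conjugation by $\uir{}{}(1,h)$ carries $P_e$ to $P_{(1,h)}$ (by \eqref{eq:shift-dilat-proj}), the symbol satisfies $S_C(t,h g) $ is obtained from $S_C(t,g)$ by a conjugation that is an isometry for the essential norm; hence $\enorm{S_C(t,g)}$ is independent of $g\in G$, and the hypothesis at one point already gives $\lim_{t\to 0}\enorm{S_C(t,g)}=0$ \emph{uniformly in} $g$. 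Next I would use homogeneity in the same spirit: conjugation by $\uir{}{}(t_0,e)$ is an essential-norm isometry relating $S_C(t,g)$ and $S_C(t t_0, \tau_{t_0}(g))$, so the quantity $\sup_{g}\enorm{S_C(t,g)}$ is in fact independent of $t>0$. Combining the two observations, the limit being $0$ forces $\sup_{g}\enorm{S_C(t,g)}=0$ for every $t$, i.e. $S_C(t,g)$ is an essentially-compact (indeed, modulo compacts, zero) operator for all $(t,g)$.

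The final step is to upgrade ``$S_C(t,g)$ is compact for all $(t,g)$'' to ``$C=0$'', which is where I expect the main obstacle. The idea is the synthesis/envelope construction of \S\,I.5 recalled in the preliminaries: $C$ is recovered from its local pieces $P_{(t,g)}CP_{(t,g)}$ via an approximating sum of the form \eqref{eq:envelope-approx}, and since $C$ is of local type the off-diagonal cross terms $P_{(t,g)}CP_{(t,h)}$ with disjoint supports are compact, so modulo compacts $C$ is genuinely built from the diagonal presymbol. If every $S_C(t,g)=P_eC_{(t,g)}P_e$ (the conjugated local piece) is compact, then every local piece $P_{(t,g)}CP_{(t,g)}$ is compact, and feeding this into the envelope reconstruction shows $C$ itself is compact. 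Finally, a compact homogeneous operator must be $0$: homogeneity means $C$ commutes with the one-parameter group of dilations $\uir{}{}(t,e)$, which acts without nonzero compact intertwiners (the dilation flow pushes mass off to infinity / to the origin, so $\uir{}{}(t^{-1},e)K\uir{}{}(t,e)\to 0$ strongly as $t\to 0$ or $t\to\infty$ for compact $K$, forcing $K=0$ when it is dilation-invariant). Hence $C=0$, i.e. $A=B$, completing the cycle. The delicate points to get right in the write-up are the convergence of the envelope approximation \eqref{eq:envelope-approx} in essential norm and the precise ``no compact intertwiners for the dilation group'' argument; both are minor modifications of Simonenko's original reasoning in \cite{Simonenko65b}*{\S~II.2} and, per the remark opening this section, I would state them as such rather than reprove them in detail.
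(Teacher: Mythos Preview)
The paper does not actually prove this lemma: immediately before the two lemmas in Section~\ref{sec:local-invar} it says ``Proofs will be omitted since they are easy modifications of the original ones~\cite{Simonenko65b}.'' So there is no in-paper argument to compare yours against; your sketch is a reasonable reconstruction of the expected line (constancy of the symbol from the Corollary, then a ``compact homogeneous operator is zero'' step), and you yourself close by deferring the two delicate points to \cite{Simonenko65b}*{\S~II.2} in exactly the same spirit.

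One organisational remark worth fixing: you announce the cycle \((1)\Rightarrow(2)\Rightarrow(3)\Rightarrow(1)\) but then actually establish \((3)\Rightarrow(1)\), \((2)\Rightarrow(1)\), and \((1)\Rightarrow(3)\). That still gives the full equivalence once you add the trivial \((3)\Rightarrow(2)\) (if \(A=B\) then \(S_A=S_B\) everywhere), but the announced plan and the executed one should match. In fact your \((2)\Rightarrow(1)\) argument---constancy of the symbol forces \(S_{A-B}\equiv 0\)---feeds straight into the synthesis step you use for \((1)\Rightarrow(3)\), so you could just as well present \((2)\Rightarrow(3)\) directly and shorten the chain.
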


A shift-invariant operator on \(G\) can be associated to a
convolution. A convolution, which is also a homogeneous operator, shall
have singular kernels. A study of such convolutions can be carried out
by means of (non-commutative) harmonic analysis on \(G\). For the
(commutative) Euclidean group this was illustrated
in~\cite{Simonenko65a,Simonenko65b}. A non-commutative example of the
Heisenberg group can be found
in~\cite{Kisil94a,Kisil96e,Kisil92,Kisil93e,Kisil93b,Kisil94f}. It is
also possible to study this operators through further versions of
wavelets (coherent) transform, e.g. the Berezin-type
symbols~\cite{Kisil98a}. In the common case boundedness of the Berezin
symbols corresponds to the boundedness of the operator, and if the
symbol vanishes at the infinity then the operator is compact. 

Once a good description of singular convolutions is obtained (through
covariant transform or several such transforms applied in a sequence)
we can consider the class of operators which can be reduced to them.
\begin{defn}
  \cite{Simonenko65b}*{\S~III.1}
  A linear operator \(A\) of local type is called a
  \emph{generalized singular integral} if \(A\) is equivalent at every
  point of \(G\) to a some homogeneous shift-invariant operator.
\end{defn}

The final step of the construction is synthesis of an operator from
the field of local representatives using the inverse covariant
transform from Subsection~\ref{sec:inverse-covar-transf}. To this end
we need to chose an invariant pairing on the group \(\bar{G}\),
keeping the \(ax+b\) group as an archetypal example. For operators of
local type the whole information is concentrated in the arbitrary
small neighborhood of the subgroup \(G\subset \bar{G}\), cf.
Cor.~\ref{co:hardy-behaviour}.  Thus we select the Hardy-type
functional~\eqref{eq:hardy-pairing} instead of the Haar
one~\eqref{eq:haar-pairing}.  Let \(d\mu\) be the Haar measure on the
group \(G\). Then the following integral
\begin{equation}
  \label{eq:hardy-integral}
  \scalar{f_1}{f_2}=\lim_{t\rightarrow 0} \int_G f_1(t,g) f_2(t,g)\,d\mu(g),
\end{equation}
defines an invariant pairing on the group \(\bar{G}\). 

We again make use of the fiducial operator \(F(A)=P_e A
P_e\)~\eqref{eq:fiducial}. In the language of wavelet theory we may
say that analyzing and reconstructing vectors are the same. The
respective transformation \(\uir{}{f}(t,g) F\) by an element of the
group \(\bar{G}\) is defined through the identity \([\uir{}{f}(t,g)
F](A)=P_{(t,g)} A P_{(t,g)}\) for an arbitrary \(A\). Consequently the
inverse covariant transform~\eqref{eq:inv-cov-trans} sends an operator
valued function \(A(t,g)\) to an operator through the invariant
pairing:
\begin{displaymath}
  \oper{M}: A(t,g)\mapsto A=\lim_{t\rightarrow 0} \int_G P_{(t,g)} A(t,g) P_{(t,g)}\,d\mu(g).
\end{displaymath}
The last integral may be realized through Riemann-type sums which are
lead to the approximation~\eqref{eq:envelope-approx} of an
envelope of \(A(t,g)\).

\section{Closing Remarks}
\label{sec:closing-remarks}

In this work we outlined an interpretation of the classical
Simonenko's localization method~\cites{Simonenko65a,Simonenko65b} in
the context of recently formulated covariant
transform~\cites{Kisil09d,Kisil10c}. The original localization
was used to study singular integral operators, which are convolutions
on the Euclidean group. Our interpretation allows to make a
straightforward modification of localization technique for
non-commutative nilpotent Lie groups. The crucial role is played by
the one-parameter group of automorphism realized as dilations.

Once local representatives are obtained they can be studied further by
other forms of wavelet (covariant) transform. The Berezin symbol seems
to be very suitable for this task. Such a chain
(Simonenko--Berezin--\ldots) of covariant transforms shall lead to
the full dissection of initial operator into a very detailed symbol,
which may be even scalar valued.  The opposite process, reconstruction
of an operator from its symbol or local representatives, can be done
by the inverse covariant transform, which uses the same group
structure.

The original coherent states in quantum mechanics are obtained from
the ground state of the harmonic oscillator by a unitary action of the
Weyl--Heisenberg group~\cite{AliAntGaz00}*{Ch.~1}. The next standard
move is a decomposition of an arbitrary state into a linear
superposition of coherent states, which form an overcomplete set.
Consequently, observables can be investigated through such
decompositions of states.

However, observables are primary notions of quantum theory, thus
direct techniques, which circumvent decomposition of states, look more
preferable.  Classical coherent states have the best possible (within
the Heisenberg uncertainty relations) localisation in the phase space.
Thus our localisation on nilpotent Lie groups, in particular the
Heisenberg group, has a particular significance for quantum theory.
Any observable corresponding to an operator of local type can be
represented as a compact operator and a continuous field of local
representatives. Compact operators have a discrete spectrum with a
complete set of eigenvectors each having at most a finite degeneracy.
Local representatives corresponds to observables which are highly
localised on the phase space. Thus operators of local type is a large
set of quantum observables admitting efficient calculations of their
spectrum.

It would be interesting to look for a similar construction in other
classes of Lie groups. For example, Toeplitz operators on the Bergman
space~\cite{Vasilevski08a} may be treated through the group
\(\SL\)~\cite{Kisil11c}, which is semisimple. Such groups do not admit
a group of dilation-type global automorphisms, thus some adjustments
to the scheme are required at this point.

Another interesting direction of development is operators of non-local
type. They may look very different from the view-point of geometrical
localization, however it terms of covariant transform the distinction
is not so huge. For operators of local type their Simonenko presymbol
over \(\bar{G}\times \bar{G}\) is excessive and we can consider only
the symbol in a small vicinity of the boundary \(G\) of the diagonal
in \(\bar{G}\times \bar{G}\). For operators of non-local type the
presymbol on the whole group \(\bar{G}\times \bar{G}\) shall be used.
This topic deserves a further consideration.\medskip

\textbf{Acknowledgements:} I am grateful to anonymous referees for
useful comments and suggestions, which helped to improve the paper.


\small

\bibliography{abbrevmr,akisil,analyse,algebra,arare,aclifford,aphysics}
\end{document}